\newcommand{\ee}{\mathbb E}
\newcommand{\pp}{\mathbb P}
\newcommand{\rr}{\mathbb R}
\newcommand{\prob}[1]{\pp\left( #1 \right)}
\newcommand{\bz}{\boldsymbol z}
\newcommand{\bo}{\boldsymbol \omega}
\newcommand{\QL}{\mathscr Q}
\newcommand{\WO}{\mathscr W}
\newcommand{\V}{\mathscr V}
\newcommand{\Ss}{\mathscr S}
\newtheorem{theorem}{Theorem}
\theoremstyle{remark}
\newtheorem{remark}{Remark}
\renewenvironment{proof}[1][\proofname]{\par \normalfont \trivlist
 \item[\hskip\labelsep\itshape #1]\ignorespaces
}{
 \hspace*{\fill}$\Box$ \endtrivlist
}
\renewcommand{\proofname}{\noindent {\bf Proof}}
\begin{document}
\bibliographystyle{plainnat}
\setcitestyle{numbers}

\title{Queue lengths and workloads in polling systems}

\author{O.J.\ Boxma}
\address{E{\sc urandom} and Department of Mathematics and Computer Science,
Eindhoven University of Technology, the Netherlands}
\email{Boxma@win.tue.nl}

\author{O. Kella}
\address{Department of Statistics, The Hebrew University of Jerusalem, Jerusalem 91905, Israel}
\email{Offer.Kella@huji.ac.il}
\thanks{The second author is supported by grant No. 434/09 from the Israel Science Foundation and the Vigevani Chair in Statistics}

\author{K.M.\ Kosi\'nski}
\address{E{\sc urandom}, Eindhoven University of Technology;
Korteweg-de Vries Institute for Mathematics,
University of Amsterdam, the Netherlands}
\email{Kosinski@eurandom.tue.nl}
\thanks{The third author is supported by NWO grant 613.000.701.}

\date{\today}

\keywords{polling system; queue length; steady-state distribution}
\subjclass[2010]{Primary: 60K25; Secondary: 90B22}

\begin{abstract}
For a single-server multi-station polling system we focus on the generating function and Laplace-Stieltjes transform of the time-stationary joint queue length and workload distributions, respectively, under no further assumptions on the service discipline. We express these quantities as expressions involving the generating functions of the joint queue length distribution at visit beginnings and visit completions at the various stations. The latter is known for a broad variety of cases.  Finally, we identify a workload decomposition result.
\end{abstract}

\maketitle

\section{Introduction}
\label{sec:INT}
Consider a queueing model consisting of multiple queues: $Q_1,\ldots, Q_N$, $N\ge 1$, attended
by a single server, visiting the queues one at a time in a cyclic
order. Moving from one queue to another, the server incurs a
(possibly zero) switchover time. Such single-server multiple-queue
models are commonly referred to as {\it polling systems}.
Triggered by a wide variety of applications, polling systems have
been extensively studied in the literature; see
\cite{Takagi91a,Takagi00,Wisnia06} for a series of comprehensive
surveys and \cite{Boon10,Levy90,Takagi91b} for extensive overviews of the applicability of polling systems.

In the polling literature much attention has been given to the determination 
of the probability generating function (PGF)
of the joint queue length distribution of the polling system at various epochs. 
In particular, the PGF's of the joint queue length distribution at the epochs 
at which the server begins a visit, $\V_{b_i}(\cdot)$, and completes a visit, 
$\V_{c_i}(\cdot)$, at $Q_i$ were extensively studied. 
A seminal result has been obtained by \citet{Resing93}, 
who found these PGF's for polling systems with service disciplines satisfying the so-called 
{\it branching property}. 
Service disciplines -- rules which establish the server's behaviour while visiting a queue --
that satisfy the branching property include the well known {\it exhaustive} discipline 
(per visit the server continues to serve all customers at a station until it empties) 
and {\it gated} discipline (per visit the server serves only those customers at a station 
which are found there upon its visit).

Polling systems with disciplines which do not satisfy the branching property usually defy exact analysis. A typical example of a discipline that does not satisfy the branching property is the {\it $k$-limited} discipline (per visit at most $k$ customers are served at a station). Nevertheless, it is possible to compute $\V_{b_i}$ and $\V_{c_i}$ for some special cases like completely symmetrical queues or like the two-queue case.  
For example, for the case $N=2$ with $Q_1$ served exhaustively and $Q_2$ according to the $k$-limited discipline,
the formulas for $\V_{b_i}$ and $\V_{c_i}$ were found by \citet{Winands09}.
Furthermore, for the case of $N=2$ and $Q_1$, $Q_2$ both served according to the $1$-limited discipline, the formulas for $\V_{b_i}$ and $\V_{c_i}$ were
found by solving a boundary value problem \cite{Boxma88}. 
For other examples of polling systems with non-branching disciplines that admit a solution for these PGF's we refer to the surveys \cite{Takagi97,Takagi00}.

Although the PGF's $\V_{b_i}$ and $\V_{c_i}$ have been found in a very broad variety of cases, the PGF $\QL(\cdot)$ of the steady-state joint queue length distribution at an {\it arbitrary} epoch, which 
in principle is the most important PGF, received little attention and has not been studied in full generality, one exception (that we are aware of) being \citet{Sidi92}. The first contribution of the present paper is to determine $\QL$ for a quite general class of polling systems. To explain the formula for $\QL$ let us introduce two additional PGF's. Let
$\Ss_{b_i}(\cdot)$ and $\Ss_{c_i}(\cdot)$ be the PGF's of the joint queue length distribution at service beginnings and service completions, respectively. \citet{Eisenberg72} established a link between the various PGF's: $\V_{b_i}$, $\V_{c_i}$, $\Ss_{b_i}$ and $\Ss_{c_i}$. In fact, from his observation it follows that
it is enough to determine, for example, $\V_{b_i}$, $i=1,\ldots,N$, to know all the remaining PGF's $\V_{c_i}$, $\Ss_{b_i}$, $\Ss_{c_i}$, $i=1,\ldots,N$. Hence, as noted before, all these PGF's are well known for a vast class of polling systems.
One of our main results -- \autoref{thm:main} -- reveals that the formula for $\QL$ can be expressed in terms of $\Ss_{c_i}$, $i=1,\ldots,N$ (or equivalently in any other aforementioned PGF's). Using a similar technique that allowed us to determine $\QL$, we were also able to find $\WO(\cdot)$, the Laplace-Stieltjes transform (LST) of the joint workload distribution in steady state.  We give a formula for $\WO$ in \autoref{thm:main2} expressed in $\V_{b_i}$ and $\V_{c_i}$, which is the second contribution of our paper.

$\QL$ was found in \citet{Sidi92} for polling systems restricted to the case of 
cycles with either all disciplines being gated
or all being exhaustive. $\WO$ was found in \cite{Boxma09}, for the class of polling systems with L\'evy input, but restricted to the case of branching-type service disciplines. Both of the results from \cite{Boxma09,Sidi92} assume non-zero switchover times. It should be emphasized therefore that our main results -- \autoref{thm:main} and \autoref{thm:main2} -- hold regardless of the service discipline and provide a unified expression for $\QL$ and $\WO$ for both the zero and non-zero switchover case; see also \autoref{rem:Sidi} and \autoref{rem:Boxma}.

From \autoref{thm:main2} we were also able to retrieve the well known decomposition of the total workload $W$ in a polling system into an independent sum of the amount of work $W_{M/G/1}$ in a corresponding $M/G/1$ queue and $W_{switch}$, the total amount of work in the polling system at an arbitrary epoch in a switching period. This decomposition was found in \cite{Boxma87}, however the distribution of $W_{switch}$ was only known in a few cases, cf. \cite{Takagi92}. In \autoref{rem:decomp} we present the LST of the distribution of $W_{switch}$ expressed in $\V_{b_i}$ and $\V_{c_i}$. This constitutes the third and final contribution of our paper.

The paper is organised as follows. In \autoref{sec:GPM} we give a detailed description
of the polling model under consideration, introduce the various PGF's $\V_{b_i}, \V_{c_i}, \Ss_{b_i}, \Ss_{c_i}$ and relate them to each other. \autoref{sec:MR} contains our first main result, \autoref{thm:main} -- the expression for $\QL$,
the PGF of the joint queue length distribution in steady state at an arbitrary epoch, in terms of $\Ss_{c_i}$.
\autoref{sec:work} contains our second main result, \autoref{thm:main2} -- the expression for $\WO$, the LST of the joint workload distribution in steady state at an arbitrary epoch, in terms of $\V_{b_i}$ and $\V_{c_i}$ . This section also contains \autoref{rem:decomp}, which establishes the workload decomposition in polling systems.

\section{Model description}
\label{sec:GPM}
\subsection{A general polling system}
\label{sec:GPM:1}
We consider a system of $N\ge 1$ infinite-buffer queues $Q_1,\ldots,Q_N$ and a 
single server $S$.
The service times of customers in $Q_i$ are i.i.d. (independent, identically distributed) positive
random variables generically denoted by $B_i$.
We denote the LST of $B_i$ by $\tilde B_i(\cdot)$ and assume that $b_i:=\ee B_i<\infty$. The server moves among the queues
in a cyclic order. When $S$ moves from $Q_i$ to $Q_{i+1}$, it incurs a switchover period. The durations of successive switchover times are i.i.d. non-negative random variables, which we generically denote by $S_i$. We denote the LST of $S_i$ by $\tilde S_i(\cdot)$ and assume that $s_i:=\ee S_i<\infty$; let $s:=\sum_{i=1}^N s_i$.
Customers arrive at $Q_i$ according to a Poisson process with rate 
$\lambda_i$; let $\lambda:=\sum_{i=1}^N\lambda_i$. We do not assume anything
about the service disciplines at $Q_i$.
Define $\rho_i:=\lambda_i b_i$ as
the traffic intensity at $Q_i$; let $\rho:=\sum_{i=1}^N\rho_i$.
In what follows we shall write $\bz$ for
an $N$-dimensional vector in $\rr^N$, $\bz=(z_1,\ldots,z_N)$, and we assume that $|z_i|<1$ for
every $i=1,\ldots, N$.  Throughout the paper we implicitly use the convention that any index summation is modulo $N$, for example $Q_{N+1}\equiv Q_1$.

We assume that all the usual independence assumptions hold between the service times,
the switchover times and the interarrival times. We assume that the ergodicity conditions are fulfilled 
and we restrict ourselves to results for the stationary situation. 

We also consider the variant of this model in which {\it all} the switchover times are {\it zero}.
In the latter model, $S$ makes a full cycle (viz., passes all the queues
once) when the system becomes empty and subsequently stops right before $Q_1$. All this requires zero time.
When the first new customer
arrives, $S$ cycles along the queues to that customer. Note
that the behavior of the server when the system is empty
does {\it not} affect the queue length distributions.
However, it may involve a number of (zero-length)
visits, and hence it {\it does} affect the queue length distribution
embedded at visit beginning and visit completion instants. The
convention that the server stops at some fixed queue was
used in \cite{Borst97} and turned
out to result in cleaner expressions for that embedded
queue length distribution than when the server is assumed
to stop right away. Making a full cycle before stopping may
be necessary if there is no central queue length information
available to the server so that the only way to detect
an empty system is to keep track if some fixed queue is
visited twice in zero time. 

\subsection{Distributional identities}
\label{sec:GPM:2}
For details of this subsection we refer to \cite{Borst97}.

Note that each time a visit beginning or a service completion occurs, this coincides
with either a service beginning or a visit completion.
All service beginning epochs in a visit to $Q_i$ are also service completion epochs at $Q_i$,
except the first service beginning epoch, which is also a visit beginning epoch.
Similarly, all service completion epochs at $Q_i$ are also service beginning epochs at that queue,
except the last service completion epoch, which is also a visit completion epoch.
This has been observed
by many authors, but most notably by \citet{Eisenberg72},
who restricted himself to the cases
of either exhaustive or gated service at all queues.
However, its applicability is not restricted to a particular service discipline.
After some manipulations this observation yields:
\begin{equation}
\label{eq:equilibrium}
\gamma_i \V_{b_i}(\bz)+\Ss_{c_i}(\bz)= \Ss_{b_i}(\bz)+\gamma_i \V_{c_i}(\bz).
\end{equation}
Here $\V_{b_i}(\bz)$ and $\V_{c_i}(\bz)$ denote the PGF's of the joint queue length distribution at visit beginnings
and visit completions at $Q_i$, while $\Ss_{b_i}(\bz)$ and $\Ss_{c_i}(\bz)$ denote the PGF's of the joint
queue length distribution at service beginnings and service completions at $Q_i$;
the coefficient $\gamma_i$ represents the reciprocal of the mean number of customers served at $Q_i$ 
per visit, i.e., the ratio of visit beginnings to service beginnings.
With $C$ denoting the steady-state cycle length, $1/\gamma_i=\lambda_i\ee C$.
In the case of non-zero switchover times, we simply have $\ee C=s/(1-\rho)$.
In the zero switchover times case
\begin{equation}
\label{eq:zero:3}
\frac{\V_{b_1}(0)}{\ee C}=\lambda(1-\rho).
\end{equation}
Indeed, $\V_{b_1}(0)/\ee C$ denotes the mean number of times per time unit that $S$ arrives
at $Q_1$ to find the whole system empty ($\V_{b_1}(0)$ is the probability that $S$
sees an empty system at a $Q_1$ visit). Each of those epochs is followed by exactly
one arrival of a customer to an empty system. By PASTA, there are on average $\lambda(1-\rho)$ such arrivals per time unit.

We rewrite \eqref{eq:equilibrium} into:
\begin{equation}
\label{eq:equilibriumA}
\gamma_i (\V_{b_i}(\bz) - \V_{c_i}(\bz)) = \Ss_{b_i}(\bz)- \Ss_{c_i}(\bz).
\end{equation}

Let $\Sigma(\bz):=\sum_{j=1}^N \lambda_j(1-z_j)$.
It is easy to see that $\Ss_{c_i}(\bz)$ and $\Ss_{b_i}(\bz)$ are related via
\begin{equation}
\label{eq:relation}
\Ss_{c_i}(\bz)=\Ss_{b_i}(\bz)\frac{\tilde B_i(\Sigma(\bz))}{z_i}.
\end{equation}
It follows from \eqref{eq:equilibrium} and \eqref{eq:relation} that
\begin{equation}
\label{eq:Sbi}
\Ss_{b_i}(\bz)=\frac{\gamma_i z_i}{z_i-\tilde B_i(\Sigma(\bz))}\left(\V_{b_i}(\bz)-\V_{c_i}(\bz)\right).
\end{equation}

Next we relate $\V_{b_{i+1}}$ to $\V_{c_i}$.
In a polling system with switchover times
\begin{equation}
\label{eq:rel}
\V_{b_{i+1}}(\bz)=\V_{c_i}(\bz)\tilde S_i\left(\Sigma(\bz)\right),\quad i=1,\ldots,N.
\end{equation}
In a polling system without switchover times
\begin{equation}
\label{eq:zero:1}
\V_{b_{i+1}}(\bz)=\V_{c_i}(\bz),\quad i=1,\ldots,N-1.
\end{equation}
The relation between $\V_{b_1}$ and $\V_{c_N}$ deserves
special attention in the zero switchover case, because of our convention stated in \autoref{sec:GPM:1},
concerning the behavior of the server when the system is empty.
When all
queues in the model without switchover times become
empty, S in our convention makes a full cycle and subsequently
stops right before $Q_1$ (all this requires no time).
When the first new customer arrives, S cycles along the
queues to that customer. The consequence of this is that
when the system is empty at the start of a visit to $Q_1$, then
the next visit to $Q_1$ does not take place until a customer
has arrived. Therefore,
\begin{equation}
\label{eq:zero:2}
\V_{b_1}(\bz)=\V_{c_N}(\bz)-\V_{b_1}(0)\left(1-\sum_{i=1}^N\frac{\lambda_i}{\lambda}z_i\right)
=\V_{c_N}(\bz)-\frac{\V_{b_1}(0)}{\lambda}\Sigma(\bz).
\end{equation}
In particular, observe that $\V_{b_1}(0)=\V_{C_N}(0)/2$,
due to the specific server movement in an empty system.

\section{Queue length distribution at an arbitrary time}
\label{sec:MR}
\begin{theorem}
\label{thm:main}
For a general polling system as introduced in \autoref{sec:GPM},
let $\QL(\cdot)$ be the probability generating function of the joint queue length distribution at an arbitrary time in steady-state.
Then, 
\begin{equation}
\label{eq:QL}
\QL(\bz)=
\frac{1}{\ee C}\sum_{i=1}^N\left(
\frac{\V_{b_i}(\bz)-\V_{c_i}(\bz)}{\Sigma(\bz)}
\frac{z_i\left(1-\tilde B_i(\Sigma(\bz))\right)}{z_i-\tilde B_i(\Sigma(\bz))}
+
\frac{\V_{c_i}(\bz)-\V_{b_{i+1}}(\bz)}{\Sigma(\bz)}
\right),
\end{equation}
where $\ee C=s/(1-\rho)$ in the non-zero switchover case and $\ee C=\V_{b_1}(0)/(\lambda(1-\rho))$ in
the zero switchover case.
Equivalently,
\begin{equation}
\label{eq:thm:main}
\QL(\bz)=\frac{\sum_{i=1}^N\lambda_i(1-z_i) \Ss_{c_i}(\bz)}{\sum_{i=1}^N
\lambda_i(1-z_i)}.
\end{equation}
\end{theorem}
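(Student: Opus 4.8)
The plan is to compute $\QL$ by a renewal--reward argument over one cycle and then to reconcile the result with the two claimed expressions. Writing $\QL(\bz)=\frac{1}{\ee C}\ee\int_0^C\prod_{j}z_j^{X_j(t)}\D t$, where $X_j(t)$ is the number of customers in $Q_j$ at time $t$, I would split the cycle integral into the contributions of the individual service periods at each $Q_i$ and of the switchover periods between consecutive queues (in the zero-switchover case, the single idle period preceding $Q_1$). The guiding principle is that in each such subinterval the state evolves only through Poisson arrivals, so that the PGF of the state after $t$ time units is the PGF at the start of the subinterval multiplied by $e^{-\Sigma(\bz)t}$.

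For a generic service period at $Q_i$ the state at the service beginning has PGF $\Ss_{b_i}(\bz)$ (the customer in service being counted in the $i$-th coordinate), and during the service only arrivals occur. Since the initial state, $B_i$ and the arrivals are independent, the expected integral over one service equals $\Ss_{b_i}(\bz)\,\ee\int_0^{B_i}e^{-\Sigma(\bz)t}\D t=\Ss_{b_i}(\bz)\frac{1-\tilde B_i(\Sigma(\bz))}{\Sigma(\bz)}$. There are on average $1/\gamma_i=\lambda_i\ee C$ services at $Q_i$ per cycle, so the total service contribution from $Q_i$ is $\gamma_i^{-1}\Ss_{b_i}(\bz)\frac{1-\tilde B_i(\Sigma(\bz))}{\Sigma(\bz)}$; inserting \eqref{eq:Sbi} turns this into the first summand of \eqref{eq:QL}. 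The same reasoning applied to the (unique) switchover after $Q_i$, whose integral is $\V_{c_i}(\bz)\frac{1-\tilde S_i(\Sigma(\bz))}{\Sigma(\bz)}$, combined with \eqref{eq:rel}, produces the second summand. This establishes \eqref{eq:QL} in the non-zero switchover case.

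For the equivalence with \eqref{eq:thm:main} I would argue purely algebraically from \eqref{eq:QL}. Multiplying by $\ee C\,\Sigma(\bz)$, the telescoping sum $\sum_i(\V_{c_i}-\V_{b_{i+1}})=\sum_i(\V_{c_i}-\V_{b_i})$ (valid by the mod-$N$ convention) combines with the first summand, so that each term acquires the factor $\frac{z_i(1-\tilde B_i(\Sigma(\bz)))}{z_i-\tilde B_i(\Sigma(\bz))}-1=\frac{\tilde B_i(\Sigma(\bz))(1-z_i)}{z_i-\tilde B_i(\Sigma(\bz))}$. Using \eqref{eq:Sbi} to replace $\frac{\V_{b_i}-\V_{c_i}}{z_i-\tilde B_i(\Sigma(\bz))}$ by $\Ss_{b_i}(\bz)/(\gamma_i z_i)$, then \eqref{eq:relation} to write $\tilde B_i(\Sigma(\bz))\Ss_{b_i}(\bz)=z_i\Ss_{c_i}(\bz)$, and finally $\gamma_i^{-1}=\lambda_i\ee C$, collapses the whole expression to $\ee C\sum_i\lambda_i(1-z_i)\Ss_{c_i}(\bz)$, which upon dividing by $\ee C\,\Sigma(\bz)=\ee C\sum_i\lambda_i(1-z_i)$ is exactly \eqref{eq:thm:main}.

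I expect the main obstacle to be the rigorous justification of the renewal--reward decomposition rather than the algebra: the number of services at $Q_i$ in a cycle is random and correlated with the states, so the passage from ``expected integral per service $\times$ expected number of services'' to the total cycle contribution must be made via Palm calculus (the refined Campbell theorem linking the time-stationary average to the service-beginning Palm average). The second delicate point is the zero-switchover case, where there are no switchover intervals but the server idles before $Q_1$; here I would verify separately that the idle-period contribution is exactly $(\V_{c_N}(\bz)-\V_{b_1}(\bz))/\Sigma(\bz)$ by means of \eqref{eq:zero:2} and \eqref{eq:zero:3}, so that \eqref{eq:QL}, and hence \eqref{eq:thm:main}, continue to hold with $\ee C=\V_{b_1}(0)/(\lambda(1-\rho))$.
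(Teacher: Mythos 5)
Your proposal is correct and takes essentially the same route as the paper: your renewal--reward cycle integral is exactly the paper's ``stochastic mean value theorem'' decomposition into visit and switchover contributions, and your per-service and per-switchover expected integrals reproduce the paper's $X_i$ and $Y_i$ (computed there via the LST of the past part of $B_i$ and $S_i$). The telescoping/algebraic reduction to \eqref{eq:thm:main} via \eqref{eq:Sbi} and \eqref{eq:relation}, and the zero-switchover treatment of the idle period through \eqref{eq:zero:1}--\eqref{eq:zero:3}, likewise coincide with the paper's argument.
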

\begin{proof} We will divide the proof into three parts.
First we prove \eqref{eq:QL} for $\QL$ in the
non-zero switchover times case. Next we will show that this formula is also valid in the zero switchover times case. Finally, we will show that the unified formula can be expressed as \eqref{eq:thm:main}.

Let $X_i(\cdot)$ and $Y_i(\cdot)$
be the PGF of the joint queue length distribution at an arbitrary moment during
a visit to $Q_i$ and during a switchover (idle) period between $Q_i$ and $Q_{i+1}$, respectively.\\
{\it Non-zero switchover times:}
By the stochastic mean value theorem
\begin{equation}
\label{eq:smvt}
\QL(\bz)=\frac{1}{\ee C}\sum_{i=1}^N\left(\frac{b_i}{\gamma_i}
X_i(\bz)
+
s_iY_i(\bz)
\right).
\end{equation}
Furthermore, for any $i$,
\begin{align}
\label{eq:Xi}
X_i(\bz)&=\Ss_{b_i}(\bz) \tilde B_i^{\text{past}}(\Sigma(\bz))
=
\Ss_{b_i}(\bz) \frac{1-\tilde B_i(\Sigma(\bz))}{\Sigma(\bz)b_i}
=
\frac{\gamma_i}{b_i}\frac{z_i\left(\V_{b_i}(\bz)-\V_{c_i}(\bz)\right)}{z_i-\tilde B_i(\Sigma(\bz))}
\frac{1-\tilde B_i(\Sigma(\bz))}{\Sigma(\bz)}
\end{align}
where $\tilde B_i^{\text{past}}(\cdot)$ is the LST of $B_i^{\text{past}}$, the past part
of $B_i$ and where we used \eqref{eq:Sbi}.
Analogously, 
\begin{equation}
\label{eq:Yi}
Y_i(\bz)=\V_{c_i}(\bz) \tilde S_i^{\text{past}}(\Sigma(\bz))
=
\V_{c_i}(\bz) \frac{1-\tilde S_i(\Sigma(\bz))}{\Sigma(\bz)s_i}
=
 \frac{1}{s_i}\frac{\V_{c_i}(\bz)-\V_{b_{i+1}}(\bz)}{\Sigma(\bz)},
\end{equation} 
where $\tilde S_i^{\text{past}}(\cdot)$ is the LST of $S_i^{\text{past}}$, the past part of $S_i$
and where we used \eqref{eq:rel}. 
Combining \eqref{eq:smvt}-\eqref{eq:Yi} leads to \eqref{eq:QL}.
\\
{\it Zero switchover times:}
Observe that in this case we can write
\[
\QL(\bz)=\rho \QL_{serving}(\bz)+(1-\rho) \QL_{non-serving}(\bz), 
\]
where $\QL_{serving}(\cdot)$ and $\QL_{non-serving}(\cdot)$ denote the PGF of the joint queue length distribution at an arbitrary
moment when $S$ is serving and non-serving, respectively. Trivially, $\QL_{non-serving}(\bz)\equiv 1$.

The stochastic mean value theorem gives
\[
\QL_{serving}(\bz)=\frac{1}{\sum_{i=1}^N b_i/\gamma_i}\sum_{i=1}^N \frac{b_i}{\gamma_i}X_i(\bz).
\]
Note that
\[
\frac{\rho}{\sum_{i=1}^N b_i/\gamma_i}=
\frac{\rho}{\ee C\sum_{i=1}^N b_i\lambda_i}=
\frac{\rho}{\ee C\sum_{i=1}^N \rho_i}=\frac{1}{\ee C}.
\]
Hence, in the zero switchover case,
\[
\QL(\bz)=
\frac{1}{\ee C}\sum_{i=1}^N\left(
\frac{\V_{b_i}(\bz)-\V_{c_i}(\bz)}{\Sigma(\bz)}
\frac{z_i\left(1-\tilde B_i(\Sigma(\bz))\right)}{z_i-\tilde B_i(\Sigma(\bz))}
\right)
+
1-\rho.
\]
Noting that \eqref{eq:zero:3}, \eqref{eq:zero:1} and \eqref{eq:zero:2} give
\[
\frac{1}{\ee C}\sum_{i=1}^N\frac{\V_{c_i}(\bz)-\V_{b_{i+1}}(\bz)}{\Sigma(\bz)}
=\frac{\V_{c_N}(\bz)-\V_{b_1}(\bz)}{\ee C\Sigma(\bz)}
=\frac{\V_{b_1}(0)}{\lambda\ee C}=1-\rho,
\]
we conclude that \eqref{eq:QL} is also valid in the zero switchover times case.\\
{\it Simplification of \eqref{eq:QL}:}
Observe that
\[
\sum_{i=1}^N\frac{\V_{c_i}(\bz)-\V_{b_{i+1}}(\bz)}{\Sigma(\bz)}
=
\sum_{i=1}^N\frac{\V_{c_i}(\bz)-\V_{b_i}(\bz)}{\Sigma(\bz)}.
\] 
Hence, using \eqref{eq:relation}, \eqref{eq:Sbi} and the definition of $\gamma_i$ we have
\begin{align*}
\sum_{i=1}^N\Bigg(
\frac{\V_{b_i}(\bz)-\V_{c_i}(\bz)}{\Sigma(\bz)}&\frac{z_i\left(1-\tilde B_i(\Sigma(\bz))\right)}{z_i-\tilde B_i(\Sigma(\bz))}
+
\frac{\V_{c_i}(\bz)-\V_{b_{i+1}}(\bz)}{\Sigma(\bz)}\Bigg)\\
&=
\sum_{i=1}^N\left(
\frac{\V_{b_i}(\bz)-\V_{c_i}(\bz)}{\Sigma(\bz)}\left[
\frac{z_i\left(1-\tilde B_i(\Sigma(\bz))\right)}{z_i-\tilde B_i(\Sigma(\bz))}
-1\right]\right)\\
&=
\sum_{i=1}^N\left(
\frac{\V_{b_i}(\bz)-\V_{c_i}(\bz)}{\Sigma(\bz)}\frac{\tilde B_i(\Sigma(\bz))(1-z_i)}{z_i-\tilde B_i(\Sigma(\bz))}\right)\\
&\stackrel{\eqref{eq:Sbi}}{=}
\sum_{i=1}^N
\frac{1-z_i}{\Sigma(\bz)}\frac{\Ss_{c_i}(\bz)}{\gamma_i}
\stackrel{\eqref{eq:relation}}{=}
\ee C\sum_{i=1}^N
\frac{\lambda_i(1-z_i)}{\Sigma(\bz)}\Ss_{c_i}(\bz),
\end{align*}
which completes the proof.
\end{proof}
We would like to re-emphasise that $\QL$ is completely determined when the $\Ss_{c_i}$ are known.
Thanks to the relations in \autoref{sec:GPM:2}, this is equivalent to knowing, for example, $\V_{b_i}$. The latter functions were found for various polling models as mentioned in \autoref{sec:INT}.
\begin{remark}
\label{rem:Sidi}
In \citet{Sidi92}
the authors also focus on the joint queue length distribution at an arbitrary epoch.
However, the result in \cite{Sidi92} is restricted to the case of non-zero switchover times and
cycles with either all policies being gated
or all being exhaustive.
The authors averaged over $N$ visit times and $N$ switchover times, but did not obtain 
our \autoref{thm:main}. Our theorem reveals that exactly the same structure holds, regardless of the service discipline. 
However, they do allow a more general customer behavior;
their paper is one of the few polling studies in which the system is viewed as a network, with
customers moving from queue to queue and the server visiting the queues cyclically.
\end{remark}
\begin{remark}
Observe that \autoref{thm:main} immediately gives the formula for the marginal distributions.
Indeed, for a vector $\bz_{M,i}=(1,\ldots,1,z_i,1,\ldots,1)$, $\QL(\bz_{M,i})=\Ss_{c_i}(\bz_{M,i})$.
From the `step' (level crossing) argument it follows that $\Ss_{c_i}(\bz_{M,i})$ is also the PGF of
the queue length distribution in $Q_i$ at an {\em arrival} epoch
at $Q_i$. By PASTA it is also the steady-state distribution of $Q_i$.

Next take $\bz_T=(z,\ldots,z)$. \autoref{thm:main} now states that
the PGF of the distribution of the total queue length (in terms of $z$) equals
$\sum_{i=1}^N \lambda_i \Ss_{c_i}(\bz_T)/\sum_{j=1}^N \lambda_j$.
This formula may be interpreted as follows. By PASTA, $\QL(\bz_T)$ is also the PGF
of the distribution of the total queue length at an arrival epoch.
By a level crossing argument, it follows that this equals the PGF of the distribution
of the total queue length just after a departure epoch.
The result now follows from the observation
that a fraction $\lambda_i / \sum_{j=1}^N \lambda_j$
of the departure epochs refers to a departure from $Q_i$.
\end{remark}
\begin{remark}
It would be very interesting to have an interpretation of  \eqref{eq:thm:main}.
Below we present a short derivation, which essentially combines several of the formulas above,
but which fails to give an immediate explanation:
\begin{align*}
\QL(\bz) \stackrel{\eqref{eq:smvt}}{=}&  \sum_{i=1}^N \rho_i X_i(\bz) + (1-\rho) \sum_{i=1}^N \frac{s_i}{s} Y_i(\bz)
\\
\stackrel{\eqref{eq:Xi},\eqref{eq:Yi}}{=} &
\sum_{i=1}^N \rho_i \frac{\Ss_{b_i}(\bz) - z_i \Ss_{c_i}(\bz)}{b_i \sum_{j=1}^N \lambda_j(1-z_j)}
+ \frac{1}{\ee C} \sum_{i=1}^N \frac{\V_{c_i}(\bz) - \V_{b_{i+1}}(\bz)}{\sum_{j=1}^N \lambda_j(1-z_j)}
\\
= &
\frac{\sum_{i=1}^N \lambda_i (\Ss_{b_i}(\bz) - z_i \Ss_{c_i}(\bz))}{\sum_{j=1}^N \lambda_j(1-z_j)}
+ \frac{1}{\ee C} \frac{\sum_{i=1}^N (\V_{c_i}(\bz) - \V_{b_i}(\bz))}{\sum_{j=1}^N \lambda_j(1-z_j)}
\\
\stackrel{\eqref{eq:equilibriumA}}{=} &
\frac{\sum_{i=1}^N \lambda_i (\Ss_{b_i}(\bz) - z_i \Ss_{c_i}(\bz))}{\sum_{j=1}^N \lambda_j(1-z_j)}
+ \frac{\sum_{i=1}^N \lambda_i (\Ss_{c_i}(\bz) - \Ss_{b_i}(\bz))}{\sum_{j=1}^N \lambda_j(1-z_j)}
\\
=& 
\frac{\sum_{i=1}^N\lambda_i(1-z_i) \Ss_{c_i}(\bz)}{\sum_{i=1}^N
\lambda_i(1-z_i)}.
\end{align*}
\end{remark}

\section{Workload distribution at an arbitrary time}
\label{sec:work}
\newcommand{\Bo}{\tilde{ \boldsymbol B}(\bo)}
The idea of the proof of \autoref{thm:main} can be also used to determine the LST $\WO$ of the joint workload distribution
at an arbitrary epoch.
For future use, let  $\bo := (\omega_1,\dots,\omega_N)$ and
$\Bo:=(\tilde B_1(\omega_1),\ldots,\tilde B_N(\omega_N))$.
Moreover, recall that $\Sigma(\bz)=\sum_{j=1}^N\lambda_j(1-z_j)$.
\begin{theorem}
\label{thm:main2}
For a general polling system as introduced in \autoref{sec:GPM}, 
let $\WO(\cdot)$ be the Laplace-Stieltjes transform of the joint workload distribution at an arbitrary time in steady-state.
Then,
\[
\WO(\bo) =
\frac{1}{\ee C}\sum_{i=1}^N
\frac{\V_{b_i}(\Bo)-\V_{c_i}(\Bo)}{\Sigma(\Bo)}
\frac{\omega_i}{\Sigma(\Bo)-\omega_i},
\]
where $\ee C=s/(1-\rho)$ in the non-zero switchover case and $\ee C=\V_{b_1}(0)/(\lambda(1-\rho))$ in
the zero switchover case.
\end{theorem}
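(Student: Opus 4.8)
The plan is to mirror the three-part structure of the proof of \autoref{thm:main}, replacing every probability generating function by the corresponding Laplace--Stieltjes transform of the joint workload. Let $\hat X_i(\bo)$ and $\hat Y_i(\bo)$ denote the LST of the joint workload at an arbitrary epoch during a visit to $Q_i$ and during a switchover between $Q_i$ and $Q_{i+1}$, respectively. In the non-zero switchover case the stochastic mean value theorem gives, exactly as in \eqref{eq:smvt},
\[
\WO(\bo)=\frac{1}{\ee C}\sum_{i=1}^N\left(\frac{b_i}{\gamma_i}\hat X_i(\bo)+s_i\hat Y_i(\bo)\right),
\]
so the task reduces to identifying $\hat X_i$ and $\hat Y_i$.

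The switchover term is the easy one. During a switchover no work is processed, so the workload is the work present at the preceding visit completion (each waiting customer contributing a fresh service time) plus the work carried in by Poisson arrivals during the elapsed part of the switchover. Since nobody is in service, this is obtained from the queue-length expression \eqref{eq:Yi} simply by the substitution $z_j\mapsto\tilde B_j(\omega_j)$; that is, $\hat Y_i(\bo)=\V_{c_i}(\Bo)\tilde S_i^{\text{past}}(\Sigma(\Bo))$, which by \eqref{eq:rel} collapses to $s_i\hat Y_i(\bo)=(\V_{c_i}(\Bo)-\V_{b_{i+1}}(\Bo))/\Sigma(\Bo)$, the workload analogue of \eqref{eq:Yi}.

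The main obstacle is $\hat X_i$, where the naive substitution $z_j\mapsto\tilde B_j(\omega_j)$ into \eqref{eq:Xi} fails: the customer currently in service contributes only its \emph{residual} service time, not a full fresh one, and this residual is correlated with the already-elapsed (age) part of that service through the inspection paradox. I would handle this by isolating the customer in service. The work of everyone \emph{else} present at the start of the current service is unaffected during that service and has LST $\Ss_{b_i}(\Bo)/\tilde B_i(\omega_i)$ (dividing by $\tilde B_i(\omega_i)$ removes the in-service customer, who is counted in $\Ss_{b_i}$). To this one multiplies the joint transform of the age $B_i^{\text{past}}$ (governing the work of arrivals during it, weighted by $\Sigma(\Bo)$) and the residual $B_i^{\text{res}}$ (the in-service customer's remaining work, weighted by $\omega_i$), namely
\[
\ee\!\left[e^{-\Sigma(\Bo)B_i^{\text{past}}-\omega_i B_i^{\text{res}}}\right]=\frac{\tilde B_i(\Sigma(\Bo))-\tilde B_i(\omega_i)}{b_i(\omega_i-\Sigma(\Bo))},
\]
which follows from the standard joint age/residual density $f_{B_i}(u+v)/b_i$. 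Substituting \eqref{eq:Sbi} at $\bz=\Bo$, the factors $\tilde B_i(\omega_i)-\tilde B_i(\Sigma(\Bo))$ cancel up to sign and this should reduce cleanly to $\frac{b_i}{\gamma_i}\hat X_i(\bo)=(\V_{b_i}(\Bo)-\V_{c_i}(\Bo))/(\Sigma(\Bo)-\omega_i)$.

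Combining the two terms yields the exact workload counterpart of \eqref{eq:QL}, after which I would finish as in \autoref{thm:main}: use the cyclic reindexing $\sum_i(\V_{c_i}(\Bo)-\V_{b_{i+1}}(\Bo))/\Sigma(\Bo)=\sum_i(\V_{c_i}(\Bo)-\V_{b_i}(\Bo))/\Sigma(\Bo)$ and combine the two fractions over the common denominator $\Sigma(\Bo)(\Sigma(\Bo)-\omega_i)$, which produces the factor $\omega_i/(\Sigma(\Bo)-\omega_i)$ and hence the stated formula. For the zero switchover case I would repeat the argument of \autoref{thm:main} verbatim with $\bz=\Bo$: the non-serving periods now carry zero workload, so their LST is identically $1$; the serving part reproduces the $\hat X_i$ sum via $\rho/\sum_i (b_i/\gamma_i)=1/\ee C$; and the telescoping identity behind \eqref{eq:zero:1}--\eqref{eq:zero:3} (valid under $\bz=\Bo$) again contributes the constant $1-\rho=\frac{1}{\ee C}\sum_i(\V_{c_i}(\Bo)-\V_{b_{i+1}}(\Bo))/\Sigma(\Bo)$, so that the unified formula holds in both regimes.
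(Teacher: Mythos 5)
Your proposal is correct and follows essentially the same route as the paper: the stochastic mean value theorem over visit and switchover periods, the substitution $z_j\mapsto\tilde B_j(\omega_j)$ for the switchover term, and the isolation of the in-service customer via the joint age/residual transform $\bigl(\tilde B_i(\omega_i)-\tilde B_i(\Sigma(\Bo))\bigr)/\bigl(b_i(\Sigma(\Bo)-\omega_i)\bigr)$ combined with \eqref{eq:Sbi}. You in fact go slightly further than the printed proof by writing out the final cyclic reindexing and the zero-switchover case, which the paper leaves to the reader.
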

\begin{proof} The proof follows the same reasoning as the proof
of \autoref{thm:main}. In particular, we focus only on the non-zero switchover times case.

Let $\tilde{X}_i(\cdot)$ and $\tilde{Y}_i(\cdot)$
be the LST of the joint workload distribution at an arbitrary moment during
a visit to $Q_i$ and during a switchover (idle) period between $Q_i$ and $Q_{i+1}$, respectively. Then
by the stochastic mean value theorem
\[
\WO(\bo)=\frac{1}{\ee C}\sum_{i=1}^N\left(\frac{b_i}{\gamma_i}
\tilde{X}_i(\bo)
+
s_i \tilde{Y}_i(\bo)
\right).
\]
Firstly, note that using \eqref{eq:Yi},
\[
\tilde Y_i(\bo)= Y_i(\Bo)
=
\frac{1}{s_i}\frac{\V_{c_i}(\Bo)-\V_{b_{i+1}}(\Bo)}{\Sigma(\Bo)}.
\]
Secondly,
\[
 \hat{X}_i(\bo) =\frac{\Ss_{b_i}(\Bo)}{\tilde{B}_i(\omega_i)} 
\times \int_{u=0}^{\infty} \int_{t=0}^{\infty}
\exp\left(- \Sigma(\Bo) u
- \omega_i t \right)
{\rm d}\prob{B_i^{past} < u, B_i^{res} < t}.
\]
Indeed, consider an arbitrary service time during a visit to $Q_i$. The term 
$\Ss_{b_i}(\Bo)/\tilde{B}_i(\omega_i)$
corresponds to the LST of the workload of all the customers in the system at the beginning of that service time,
{\em excluding} the customer whose service is about to begin.
Secondly, consider the workload that arrives at all queues during the
past part
$B_i^{past}$ of that service time, {\em and} the residual part $B_i^{res}$ of that same service time.
Next integrate with respect to the joint distribution of $B_i^{past}$ and $B_i^{res}$.
Using \eqref{eq:Sbi} and the fact that, cf. \cite[Section I.6.3]{Cohen82},
\[
\int_{u=0}^{\infty} \int_{t=0}^{\infty} \exp\left(- \Sigma(\Bo) u
- \omega_i t \right){\rm d}\prob{B_i^{past} < u, B_i^{res} < t} = \frac{\tilde{B}_i(w_i) - \tilde{B}_i(\Sigma(\Bo))}
{b_i\left(\Sigma(\Bo)-w_i\right)},
\]
we obtain
\begin{align*}
\hat{X}_i(\bo) 
&= 
\frac{\Ss_{b_i}(\Bo)}{\tilde{B}_i(\omega_i)} 
\frac{\tilde{B}_i(w_i) - \tilde{B}_i(\Sigma(\Bo))}
{b_i\left(\Sigma(\Bo)-w_i\right)}
=
\frac{\gamma_i}{b_i}
\frac{\V_{b_i}(\Bo)-\V_{c_i}(\Bo)}
{\tilde{B}_i(\omega_i) - \tilde{B}_i(\Sigma(\Bo))}
\frac{\tilde{B}_i(\omega_i) - \tilde{B}_i(\Sigma(\Bo))}
{\Sigma(\Bo) - \omega_i}
\\
&= 
\frac{\gamma_i}{b_i}
\frac
{\V_{b_i}(\Bo)
-
\V_{c_i}(\Bo)}
{\Sigma(\Bo) - \omega_i} .
\end{align*}
This completes the proof.
\end{proof}
\begin{remark}
\label{rem:Boxma}
$\WO$ was found in \cite{Boxma09} for the more general class of polling systems with L\'evy input,
but restricted to the case of branching-type service disciplines at all queues and non-zero switchover times.
Notice that $\Sigma(\Bo)-\omega_i$ is the Laplace exponent $\phi_i^{\boldsymbol A}(\bo)$ of the L\'evy process \[\boldsymbol A_i(t)=(W_1(t),\ldots,W_{i-1}(t),W_i(t)-t,W_{i+1}(t),\ldots,W_{N}(t)),\]
where $W_i$ is a compound Poisson process with jump distribution $B_i$ and rate $\lambda_i$. Moreover $\Sigma(\Bo)$ is the Laplace exponent of the L\'evy process $\boldsymbol W(t)=(W_1(t),\ldots,W_N(t))$. 
After identifying
$\V_{b_i}(\Bo)$ and $\V_{c_i}(\Bo)$ of \autoref{thm:main2} with $\tilde{\boldsymbol B}_i(\bo)$ and $\tilde{\boldsymbol E}_i(\bo)$ of Theorem 3 of \citep{Boxma09},
it can now be verified that \autoref{thm:main2} indeed
coincides with \cite[Theorem 3]{Boxma09}.
\end{remark}
\begin{remark}
\label{rem:decomp}
Observe that \autoref{thm:main2} gives the LST of the distribution
of $W$, the total workload in a polling system. With $\bo_T=(\omega,\ldots,\omega)$, we have $\ee[{\rm e}^{-\omega W}]=\WO(\bo_T)$, so that
\[
\ee[{\rm e}^{-\omega W}] =
\frac{1-\rho}{s} 
\frac{\omega}{\omega-\sum_{j=1}^N \lambda_j(1 - \tilde{B}_j(\omega))}
\sum_{i=1}^N 
\frac{\V_{c_i}(\tilde{\boldsymbol B}(\bo_T)) -
\V_{b_i}(\tilde{\boldsymbol B}(\bo_T))}
{\sum_{j=1}^N \lambda_j(1 - \tilde{B}_j(\omega))}.
\]
Note that the LST of the amount of work $W_{M/G/1}$ in a corresponding $M/G/1$ queue,
that is, an $M/G/1$ queue with arrival rate $\lambda$ 
and service time LST $\sum_{j=1}^N \lambda_j \tilde{B}_j(\omega) / \lambda$, is given by
\[
\ee[{\rm e}^{-\omega W_{M/G/1}}] = \frac{(1-\rho) \omega}{\omega - \sum_{j=1}^N \lambda_j (1 - \tilde{B}_j(\omega))}.
\]
Note that $W_{M/G/1}$ is also
the amount of work in the polling system with zero switchover times.
Finally, from the proof of \autoref{thm:main2} we know that the LST
of the distribution of the total amount of work $W_{switch}$ in a polling system at an arbitrary epoch in a switching period is given by
\begin{equation}
\label{eq:Wswitch}
\ee[{\rm e}^{-\omega W_{switch}}] =
\frac{1}{s} 
\sum_{i=1}^N s_i\tilde Y_i(\bo_T)
=\frac{1}{s}
\sum_{i=1}^N
\frac{\V_{c_i}(\tilde{\boldsymbol B}(\bo_T))
-
\V_{b_i}(\tilde{\boldsymbol B}(\bo_T))}
{\sum_{j=1}^N \lambda_j (1-\tilde{B}_j(\omega))}.
\end{equation}
Hence, we retrieve the well known decomposition (see, e.g., \cite{Boxma87})
\[
\ee[{\rm e}^{-\omega W}] =
\ee[{\rm e}^{-\omega W_{M/G/1}}]
\ee[{\rm e}^{-\omega W_{switch}}].
\]
The distribution of $W_{switch}$ was only known in a few
special cases, cf. \cite{Takagi92}. From \autoref{thm:main2} it follows that in general its LST is given by \eqref{eq:Wswitch}.
\end{remark}

\section{Acknowledgements}
The authors would like to thank Marko Boon for reading the first draft of this paper, which led to the nice
simplification of Formula \eqref{eq:QL} into \eqref{eq:thm:main}.

\small
\bibliography{Polling}

\begin{thebibliography}{17}
\providecommand{\natexlab}[1]{#1}
\providecommand{\url}[1]{\texttt{#1}}
\expandafter\ifx\csname urlstyle\endcsname\relax
  \providecommand{\doi}[1]{doi: #1}\else
  \providecommand{\doi}{doi: \begingroup \urlstyle{rm}\Url}\fi

\bibitem[Boon et~al.(2011)Boon, van~der Mei, and Winands]{Boon10}
M.A.A. Boon, R.D. van~der Mei, and E.M.M. Winands.
\newblock Applications of polling systems.
\newblock \emph{SORMS}, 16:\penalty0 67--82, 2011.

\bibitem[Borst and Boxma(1997)]{Borst97}
S.C. Borst and O.J. Boxma.
\newblock Polling models with and without switchover times.
\newblock \emph{Oper. Res.}, 45:\penalty0 536--543, 1997.

\bibitem[Boxma and Groenendijk(1987)]{Boxma87}
O.J. Boxma and W.P. Groenendijk.
\newblock Pseudo-conservation laws in cyclic-service systems.
\newblock \emph{J. Appl. Probab.}, 24:\penalty0 949--964, 1987.

\bibitem[Boxma and Groenendijk(1988)]{Boxma88}
O.J. Boxma and W.P. Groenendijk.
\newblock Two queues with alternating service and switching times.
\newblock In O.J. Boxma and R.~Syski, editors, \emph{Queueing {T}heory and its
  {A}pplications}, Liber Amicorum for J.W. Cohen, pages 261--282.
  North-Holland, Amsterdam, 1988.

\bibitem[Boxma et~al.(2009)Boxma, Ivanovs, Kosi\'nski, and Mandjes]{Boxma09}
O.J. Boxma, J.~Ivanovs, K.M. Kosi\'nski, and M.~Mandjes.
\newblock L\'evy driven polling systems and continuous-state branching
  processes.
\newblock Technical Report 2009-030, EURANDOM, 2009.

\bibitem[Cohen(1982)]{Cohen82}
J.W. Cohen.
\newblock \emph{The Single Server Queue}.
\newblock North-Holland Publishing Company, Amsterdam, 1982.

\bibitem[Eisenberg(1972)]{Eisenberg72}
M.~Eisenberg.
\newblock Queues with periodic service and changeover time.
\newblock \emph{Oper. Res.}, 20:\penalty0 440--451, 1972.

\bibitem[Levy and Sidi(1990)]{Levy90}
H.~Levy and M.~Sidi.
\newblock Polling models: applications, modeling and optimization.
\newblock \emph{IEEE Trans. Commun.}, 38:\penalty0 1750--1760, 1990.

\bibitem[Resing(1993)]{Resing93}
J.A.C. Resing.
\newblock Polling systems and multitype branching processes.
\newblock \emph{Queueing Syst.}, 13:\penalty0 409--426, 1993.

\bibitem[Sidi et~al.(1992)Sidi, Levy, and Fuhrmann]{Sidi92}
M.~Sidi, H.~Levy, and S.W. Fuhrmann.
\newblock A queueing network with a single cyclically roving server.
\newblock \emph{Queueing Syst.}, 11:\penalty0 121--144, 1992.

\bibitem[Takagi(1991{\natexlab{a}})]{Takagi91a}
H.~Takagi.
\newblock \emph{Queueing Analysis}, volume~1.
\newblock North-Holland, Amsterdam, 1991{\natexlab{a}}.

\bibitem[Takagi(1991{\natexlab{b}})]{Takagi91b}
H.~Takagi.
\newblock Application of polling models to computer networks.
\newblock \emph{Comput. Netw. ISDN Syst.}, 22:\penalty0 193--211,
  1991{\natexlab{b}}.

\bibitem[Takagi(1997)]{Takagi97}
H.~Takagi.
\newblock Queueing analysis of polling models: progress in 1990–-1994.
\newblock In J.H. Dshalalow, editor, \emph{Frontiers in Queueing: Models,
  Methods and Problems}, pages 119--146. CRC Press, Boca Raton, 1997.

\bibitem[Takagi(2000)]{Takagi00}
H.~Takagi.
\newblock Analysis and application of polling models.
\newblock In G.~Haring, C.~Lindemann, and M.~Reiser, editors, \emph{Performance
  Evaluation: Origins and Directions}, volume 1769 of \emph{Lecture Notes in
  Computer Science}, pages 424--442. Springer, Berlin, 2000.

\bibitem[Takagi et~al.(1992)Takagi, Takine, and Boxma]{Takagi92}
H.~Takagi, T.~Takine, and O.J. Boxma.
\newblock Distribution of the workload in multiclass queueing systems with
  server vacations.
\newblock \emph{Naval Research Logistics}, 39:\penalty0 41--52, 1992.

\bibitem[Vishnevskii and Semenova(2006)]{Wisnia06}
V.M. Vishnevskii and O.V. Semenova.
\newblock Mathematical methods to study the polling systems.
\newblock \emph{Autom. Remote Control}, 67:\penalty0 173--220, 2006.

\bibitem[Winands et~al.(2009)Winands, Adan, {van Houtum}, and Down]{Winands09}
E.M.M. Winands, I.J.B.F. Adan, G.J. {van Houtum}, and D.G. Down.
\newblock A state-dependent polling model with $k$-limited service.
\newblock \emph{Probab. Engrg. Inform. Sci.}, 23:\penalty0 385--408, 2009.

\end{thebibliography}
\end{document}